\documentclass[12pt]{amsart}
\usepackage{epsfig}
\usepackage{amsmath}
\usepackage{amsthm, amssymb, amscd}
\usepackage{hyperref}
\DeclareSymbolFont{AMSb}{U}{msb}{m}{n}
\DeclareMathSymbol{\N}{\mathbin}{AMSb}{"4E}
\DeclareMathSymbol{\Z}{\mathbin}{AMSb}{"5A}
\DeclareMathSymbol{\R}{\mathbin}{AMSb}{"52}
\DeclareMathSymbol{\Q}{\mathbin}{AMSb}{"51}
\DeclareMathSymbol{\I}{\mathbin}{AMSb}{"49}
\DeclareMathSymbol{\C}{\mathbin}{AMSb}{"43}

\textwidth = 6 in
\textheight = 8.5 in
\oddsidemargin = 0.25 in
\evensidemargin = 0.25 in
\topmargin = 0.0 in
\headheight = 0.25 in
\headsep = 0.25 in
\parskip = .04in
\parindent = 0.0in

\author[B.\ Davis, H.N.\ Howards, J.\ Newman, J.\ Parsley]{Bob Davis, Hugh Howards, Jonathan Newman, Jason Parsley}
\title{An infinite family of convex Brunnian links in $\R^n$}

\subjclass{Primary: 57Q45}
\keywords{Brunnian links, high-dimensional knot theory, Borromean rings} 

\newtheorem{theorem}{Theorem}[section]

\newtheorem{lemma}[theorem]{Lemma}

\newtheorem*{mainthm}{Theorem~\ref{thm:cbl}}

\theoremstyle{definition}

\newtheorem{conjecture}[theorem]{Conjecture}

\newcommand{\bi}{\begin{itemize}}
\newcommand{\ei}{\end{itemize}}
\newcommand{\be}{\begin{enumerate}}
\newcommand{\ee}{\end{enumerate}}

\renewcommand{\q}{\tilde{q}}
\def\minus{\hbox{--}}

\begin{document}
\begin{abstract}
%Howards generalized a result of Freedman and Skora from dimension 3 to show that no Brunnian link of any dimension can be constructed using round spheres, and furthermore,  the Borromean rings are the only Brunnian link (of dimension 3) that can be built using three convex components.  
This paper proves that convex Brunnian links exist for every dimension $n \geq 3$ by constructing explicit examples.  These examples are three-component links which are higher-dimensional generalizations of the Borromean rings.
\end{abstract}
\maketitle

\begin{figure}[h]
	\centering
	\includegraphics[width=0.35\textwidth]{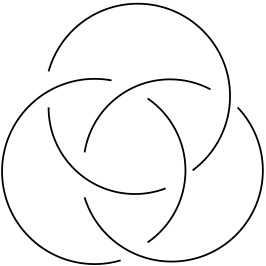}
	\caption{The Borromean Rings}
	\label{fig:bor}
\end{figure}

\section{Introduction}
The link depicted in Figure~\ref{fig:bor} is known as the Borromean rings and appears to consist of three round circles.  This, however, was proven to be an optical illusion by Mike Freedman and Richard Skora in \cite{MR873456}, who showed that at least one component must be noncircular.  A different proof of this result was given by Bernt Lindstr{\"o}m and Hans-Olov Zetterstr{\"o}m in \cite{MR1103189}; it seems they were not aware of the earlier result.  

Brunnian links were introduced over a hundred years ago by Hermann Brunn in his 1892 paper ``Uber Verkettung'' ({\it ``On Linking''}) \cite{brunn}. They have been generalized both in $\R^3$ as well as in higher dimensions.  Debrunner \cite{MR0137106} and Penney \cite{MR0238302} each looked at generalizations of Brunnian links in $\R^3$ in 1961 and 1969 respectively.  Takaaki Yanagawa was the first to look at higher-dimensional Brunnian links such as the ones we study in this paper back in 1964 when he constructed 2-spheres in $\R^4$ that formed Brunnian links \cite{MR0173256}.

Instead of linked circles in $\R^3$, one can consider linked spheres of various dimensions in $\R^n$.  In \cite{MR2384264} it is shown that no Brunnian link in $\R^n$ can ever be built out of round spheres for any $n \in \Z^+$.    

However, the Borromean rings can be built out of two circles and one ellipse that is arbitrarily close to a circle by using the equations 
\begin{align*}
K_1 & =  \{x_{1}^2 + x_{2}^2 = r_1^2, \, x_{3}=0 \} \\
K_2 & = \{ x_2^2+x_3^2  = r_2^2, \, x_{1}= 0 \} \\
K_3 & = \{ \frac{x_{1}^2}{(r_3)^2}+ \frac{x_{3}^2}{(r_4)^2}= 1,  \, x_{3}=0 \},
\end{align*} 
where $r_3 < r_1 < r_2 < r_4$, all arbitrarily close to each other.

In \cite{MR2287436} it is proven that although there are an infinite number of Brunnian links of 3 components in $\R^3$ (or any number of components $\geq 3$), the Borromean rings are the only Brunnian link in dimension three of either 3 or 4 components that can be built out of convex components.  The result was extended to 5 components in \cite{davis}.

The question of whether any Brunnian link  in $\R^n$ ($n \geq 3$) other than the Borromean rings can be built out of convex components was asked in \cite{MR2384264, davis}.
The main result of this  answers that question in the affirmative: 

\begin{mainthm}
Consider the infinite family $L_{i,n}$ of three-component links given explicitly by \eqref{firstlink}; each consists of a round $(n\minus 2)$-sphere, a round $(n\minus i \minus 1)$-sphere and an $i$-dimensional ellipsoid sitting in $\R^n$ (for $1 \leq i \leq n \minus 2$ and $n \geq 3$).  Each $L_{i,n}$ is a convex Brunnian link.
\end{mainthm}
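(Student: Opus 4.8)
\emph{The plan.} Since every component is a round sphere or an ellipsoid, each bounds a solid ball or solid ellipsoid in the affine subspace it spans, so convexity is immediate; the entire content is to show that the link is \emph{Brunnian}, i.e.\ that it is nontrivial while each two-component sublink is the trivial link. Write $A$ for the $(n-2)$-sphere, $B$ for the $(n-i-1)$-sphere and $C$ for the $i$-dimensional ellipsoid, and let $D_A,D_B,D_C$ denote the flat round balls they respectively bound inside their spanning subspaces (of dimensions $n-1$, $n-i$ and $i+1$). I would organize the whole argument around these three balls and the orthogonality pattern of the coordinate subspaces appearing in the defining equations.

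\emph{Sublinks are trivial.} The key elementary observation is that the nesting of radii $r_3<r_1<r_2<r_4$ forces three disjointness relations: $D_A\cap B=\varnothing$ (using $r_1<r_2$), $D_B\cap C=\varnothing$ (using $r_2<r_4$) and $D_C\cap A=\varnothing$ (using $r_3<r_1$). I would verify each by substituting the vanishing coordinates of one subspace into the sphere/ellipsoid equation of the other and reading off that the remaining radial equation has no solution. Granting these, I use the standard fact that if a sphere $J$ bounds an embedded ball disjoint from a second sphere $J'$, then $\{J,J'\}$ is split, hence (both components being unknotted round objects) is the trivial link. Applying this to $\{A,B\}$ via $D_A$, to $\{B,C\}$ via $D_B$, and to $\{A,C\}$ via $D_C$ shows that deleting any one component leaves the unlink. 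In particular all pairwise linking numbers vanish, including that of the complementary-dimensional pair $B,C$, for which $\dim B+\dim C=n-1$.

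\emph{The whole link is nontrivial.} This is the crux. Note that $\dim A+\dim B+\dim C=2n-3$ and that the spanning balls have codimensions summing to $1+i+(n-i-1)=n$, so three generic Seifert manifolds for $A,B,C$ meet in a $0$-dimensional set; this is exactly the range in which a $\Z$- (or $\Z/2$-) valued triple linking number $\bar\mu_{ABC}$, the higher-dimensional Milnor invariant / Massey product in $H^\ast(\R^n\setminus L_{i,n})$, is defined once the pairwise linking numbers vanish. I would compute $\bar\mu_{ABC}$ directly from $D_A,D_B,D_C$: because the three spanning subspaces are mutually orthogonal in complementary families of coordinates, their common intersection $D_A\cap D_B\cap D_C$ consists of the single center point, and a tangent-space check shows the three balls meet there transversally. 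Hence $\bar\mu_{ABC}=\pm1$. Since $\bar\mu_{ABC}$ vanishes on the trivial link (whose components bound disjoint balls, producing no triple points) and is an isotopy invariant, $L_{i,n}$ cannot be trivial; combined with the previous step, $L_{i,n}$ is a convex Brunnian link.

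\emph{Main obstacle.} The genuine difficulty lies entirely in the last step: making the triple linking number a rigorous isotopy invariant in this dimension range. The delicate point is that, for $i\ge1$, a Seifert manifold for $A$ \emph{cannot} be made disjoint from $C$ (their dimensions force an intersection), so the invariant must be set up to count interior triple points while tolerating these forced lower-dimensional intersections, and one must pin down orientations and the $\Z$-versus-$\Z/2$ distinction. I would either invoke the established theory of $\bar\mu$-invariants and link homotopy for higher-dimensional links or develop the needed special case by hand. As a guiding sanity check---and a possible alternative route by induction on $n$---I note that slicing $L_{i,n}$ by a generic $3$-plane meeting all three spanning subspaces recovers precisely the classical Borromean rings, whose nontriviality is known; the transverse triple point at the origin is the higher-dimensional shadow of the classical invariant $\bar\mu_{123}=\pm1$.
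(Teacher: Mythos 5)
Your convexity and sublink steps coincide with the paper's own (for each pair, one flat ball misses the other component, so the pair is split, hence an unlink), and your key geometric observation is correct and pleasant: the three flat balls $B_1$, $B_2$, $B_3$ meet in exactly the origin, and transversally, since their normal spaces together span $\R^n$. But your route for the crux --- nontriviality of the whole link --- rests entirely on a triple linking invariant $\bar\mu_{ABC}$ that you never construct, and this is a genuine gap, not a deferrable detail. The entire content of the theorem is to exhibit \emph{some} quantity that is an isotopy invariant and distinguishes $L_{i,n}$ from the unlink; asserting the existence, isotopy invariance, and computability of $\bar\mu_{ABC}$ is assuming precisely the hard part, as you yourself concede in your ``main obstacle'' paragraph. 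Worse, the specific computation you propose is not valid as stated: a naive signed count of triple points of Seifert membranes is not an isotopy invariant in general --- even for classical links the Mellor--Melvin formula for $\bar\mu_{123}$ carries correction terms, which can only be suppressed under disjointness hypotheses between each membrane and the other two link \emph{components}. Your flat balls violate exactly those hypotheses: $B_1$ contains the equator of $K_3$, $B_2$ meets $K_1$ in an $(n{-}i{-}2)$-sphere, and $B_3$ meets $K_2$ in the two points $(0,\dots,0,\pm 3)$. Two of your supporting claims are also false: such intersections are \emph{not} forced by dimension (each pair of components is an unlink, so pairwise disjoint Seifert balls exist; dimension counting constrains generic intersections, it does not force nonempty ones), and the proposed sanity check by slicing with a generic $3$-plane fails (for $i \geq 2$ a generic $3$-plane meets $K_2$ in a $0$-sphere or not at all, so the slice is not the Borromean rings; and in any case ambient isotopies of $\R^n$ do not restrict to isotopies of a $3$-plane, so nontriviality of a slice proves nothing about the link).

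Your approach can in principle be completed --- it is essentially Massey's classical argument that three spheres of dimensions summing to $2n-3$ can form a nontrivial link, via Massey products in $H^*(S^n \setminus L)$, or alternatively Koschorke's higher-dimensional link-homotopy invariants --- but carrying that out rigorously is a substantial project that your proposal only gestures at. The paper avoids this machinery entirely with a self-contained, elementary argument: if $K_2$ bounded a ball $D_2$ missing $K_1 \cup K_3$, a reflection trick across the hyperplane $x_n = 0$ (Lemma~\ref{lemma:disjt}) converts $D_2$ into an \emph{immersed} ball missing $K_1 \cup K_3 \cup B_1$, hence missing the capped-off hemisphere $K_3' \subset K_3 \cup B_1$; this contradicts Lemma~\ref{lemma:hopf}, which shows $K_2 \cup K_3'$ is a generalized Hopf link that is not split, using only stereographic projection, an isotopy to complementary great spheres, and $\pi_i(S^n - G_2) \cong \pi_i(G_3) \cong \Z$ from the deformation retraction of Lemma~\ref{lemma:dr}. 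If you want to salvage your route, the honest path is the Massey product computation in the complement, not a triple-point count from membranes that meet the other components.
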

As we will see, each of these links is a natural generalization of the Borromean rings.  Moreover, each can be constructed so the ellipsoid is arbitrarily close to being round.    

\smallskip

This  is organized as follows:  the next section covers the background and relevant definitions for higher-dimensional linking.  The main portion of this  is section~\ref{sec:main}, which proves Theorem~\ref{thm:cbl}.  

In section~\ref{sec:proof2}, we provide a second proof of Theorem~\ref{thm:cbl} for the special case $i=1$, in which we explicitly realize the first component as an ellipse (an $S^1$) and the other two as round $(n\minus 2)$-spheres.  This second proof uses the fundamental group as its main tool but does not extend to the general case.  Section~\ref{sec:open} concludes this  with questions and conjectures about other convex Brunnian links:  do they exist?  Do the Borromean rings generalize to three $(n\minus 2)$-spheres sitting in $\R^n$?

\section{Standard definitions}

Recall that a {\em knot} is a subset of $\R^3$ or $S^3$ that is homeomorphic
to a circle (also called a 1-sphere or $S^1$).  If the knot bounds an embedded disk
it is called an {\em unknot}; otherwise it is knotted.

A {\em link} $L$ is a collection of disjoint knots.  A link $L$ is an {\em unlink}
of $n$ components if it consists of $n$ unknots and if the components
simultaneously bound disjoint embedded disks.

A {\em Brunnian link} $L$ is a link of $n \geq 3$ components that is not an unlink, but every proper sublink of $L$ is an unlink.  The {\em Borromean rings} form the most famous example of a Brunnian link (Figure~\ref{fig:bor}).  Note that eliminating any one of the components yields an unlink.

A subset $K$ of $\R^n$ is a {\em knot in $\R^n$} if $K$ is
homeomorphic to $S^k$ for some $k$. By a {\em link in $\R^n$} is meant a subset $L$ of
$\R^n$ that is homeomorphic to a disjoint union
of finitely many knots (possibly of different
dimensions).  Knot theory is usually restricted to the case where $n=3$; each knot is homeomorphic to $S^1$.

A link $L = F_1 \cup F_2 \cup \dots F_m$ in  $\R^n$  is {\em an
unlink in $\R^n$} if for each $i$ the knot $F_i$ bounds a ball $B_i$ (of appropriate dimension) such that
$B_i \cap F_j = \emptyset$ ($i \neq j$).
If $n =3$ and we restrict to circles, we obtain the traditional definition of an unlink,
where each component bounds a disk disjoint from the other components.
If a link $L$ (in $\R^n$) of  $m (\geq 3)$ components
is not an unlink (in $\R^n$), yet every proper sublink is an unlink (in $\R^n$), we call $L$ a {\em Brunnian link in $\R^n$}.

We say a link $L$ is a {\em split link in $\R^n$} if there is an $(n-1)$-sphere that is disjoint from the link and separates $\R^n$ into two components, each containing at least one component of the link. (The $(n-1)$-sphere need not be round.)

A knot $S^k$ is said to be {\em convex} if it bounds a ball $B^{k+1}$ which is convex.

A {\em generalized Hopf link} is any link of two components, one an $S^j$ and the other an $S^{n-(j+1)}$ in $\R^n$ or $S^n$, each of which bounds a ball that intersects the other sphere transversally in exactly one point.  Examples include any link isotopic to the following link in $\R^n$:
 $F_1 = \{(x_1, x_2, \dots x_n): x_{i+1}^2+x_{i+2}^2 + \dots + x_n^2 = 1, x_1=x_2= \dots = x_i = 0 \}$
 and $F_2 = \{(x_1, x_2, \dots x_n): x_1^2 + x_2^2 + \dots + x_{i}^2 + (x_{n}-1)^2 = 1, x_{i+1}=x_{i+2}= \dots = x_{n-1}=0 \}$, where $1 \leq i \leq n-2$.

\section{An infinite family of convex Brunnian links}
\label{sec:main}

In this section, we present our main result, the existence of an infinite family of three-component convex Brunnian links in $\R^n$.  Define the family $L_{i,n}$ as
\begin{equation} 
\label{firstlink}
L_{i,n} = K_1 \cup K_2 \cup K_3 
\end{equation}
\begin{align*}
K_1 &= \{(x_1, x_2, \dots x_n): x_1^2 + x_2^2 + \dots + x_{n-1}^2 = 4, \; x_n=0 \} \\
K_2 &= \{(x_1, x_2, \dots x_n): x_{i+1}^2+x_{i+2}^2 + \dots + x_n^2 = 9, \; x_1=x_2= \dots = x_i = 0 \} \\
K_3 &= \{(x_1, x_2, \dots x_n): x_1^2 + x_2^2 + \dots + x_{i}^2 + \tfrac{x_{n}^2}{16} = 1, x_{i+1}= \dots = x_{n-1}=0 \},
\end{align*}
where $n \geq 3$ and $1 \leq i \leq n-2$.  

\begin{theorem} 
Each $L_{i,n}$ is a convex Brunnian link.
\label{thm:cbl}
\end{theorem}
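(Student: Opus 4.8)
The plan is to establish three facts: each component bounds a convex ball; each two\nobreakdash-component sublink is an unlink; and the whole link is not an unlink. The first is immediate --- $K_1$ bounds the flat $(n-1)$\nobreakdash-ball $D_1=\{x_1^2+\cdots+x_{n-1}^2\le 4,\ x_n=0\}$, $K_2$ bounds the flat $(n-i)$\nobreakdash-ball $D_2=\{x_{i+1}^2+\cdots+x_n^2\le 9,\ x_1=\cdots=x_i=0\}$, and $K_3$ bounds the solid ellipsoid $D_3$, and each of these is convex. Since $L_{i,n}$ has three components and each single component is an unknotted sphere, the Brunnian condition reduces to showing that each of the three two\nobreakdash-component sublinks is an unlink and that $L_{i,n}$ itself is not. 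A short computation records the cyclic incidence pattern of these standard balls: $D_1\cap K_2=D_2\cap K_3=D_3\cap K_1=\emptyset$, whereas $D_1\cap K_3$ is the radius\nobreakdash-$1$ sphere $\{x_n=0,\ x_{i+1}=\cdots=x_{n-1}=0,\ x_1^2+\cdots+x_i^2=1\}\cong S^{i-1}$, $D_2\cap K_1\cong S^{n-i-2}$, and $D_3\cap K_2=\{(0,\dots,0,\pm 3)\}$.

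For the pairwise unlinks I would use, in each pair, the standard ball of one component together with an \emph{outer} ball for the other. The point is that in each case the locus where the second component would meet the first lies strictly inside one solid region, so the complementary (outer) ball, taken in the one\nobreakdash-point compactification of the ambient coordinate subspace, misses the other component. Concretely: $K_1$ and $K_2$ lie on the round spheres $|x|=2$ and $|x|=3$, so the sphere $|x|=5/2$ splits them and each bounds a ball on its own side; for $\{K_1,K_3\}$ the intersection $K_3\cap\{x_n=0\}$ is the radius\nobreakdash-$1$ sphere, which lies strictly inside $K_1$, so the outer disk of $K_1$ in $\{x_n=0\}\cup\{\infty\}$ avoids $K_3$ while $D_3$ avoids $K_1$; for $\{K_2,K_3\}$ the two points $(0,\dots,0,\pm 3)=K_2\cap V_3$ lie strictly inside the solid ellipsoid, where $V_3=\{x_{i+1}=\cdots=x_{n-1}=0\}$ is the $(i+1)$\nobreakdash-plane containing $K_3$, so the outer ball of $K_3$ in $V_3\cup\{\infty\}$ avoids $K_2$ while $D_2$ avoids $K_3$. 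The nesting $1<2<3<4$ of the radii is precisely what places each locus on the correct side, the same feature that makes the classical Borromean rings of the introduction work.

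The main obstacle is showing $L_{i,n}$ is not an unlink; it suffices to prove $K_3$ bounds no $(i+1)$\nobreakdash-ball in $\R^n\setminus(K_1\cup K_2)$, that is, $[K_3]\neq 0$ in $\pi_i\bigl(\R^n\setminus(K_1\cup K_2)\bigr)$. The difficulty is that every pairwise linking number vanishes --- the incidence loci above occur in cancelling pairs --- so no homological (first\nobreakdash-order) invariant can detect the link, and the obstruction is genuinely secondary, exactly as for the Borromean rings. I would compute the homotopy type of the complement: passing to $S^n$ and using Alexander duality together with a decomposition of $S^n$ into disjoint round balls around $K_1$ and $K_2$, one finds $S^n\setminus(K_1\cup K_2)\simeq S^1\vee S^i\vee S^{n-1}$, where the $S^1$ and $S^i$ are meridians of $K_1$ and $K_2$ and the $S^{n-1}$ is a separating sphere. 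Since $i\le n-2$, the last wedge factor does not affect $\pi_i$, so $\pi_i\bigl(\R^n\setminus(K_1\cup K_2)\bigr)\cong\pi_i(S^1\vee S^i)$. The crux, and the step I expect to be hardest, is to identify $[K_3]$ with the Whitehead product $[\iota_1,\iota_i]$ under this equivalence, by tracking from the explicit equations how the ellipsoid threads $K_1$ once and $K_2$ once in the commutator pattern forced by the radii $1<2<3<4$ rather than in a cancelling one. Granting this, one finishes by noting $[\iota_1,\iota_i]\neq 0$: in the universal cover of $S^1\vee S^i$ (a line carrying a copy of $S^i$ over each integer) it represents $(1-t)$ times a generator of the free $\Z[\pi_1]$\nobreakdash-module $\pi_i$, hence is nonzero. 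When $i=1$ this is simply the statement that $[K_3]$ is the nontrivial commutator in the free group $\pi_1=\langle a,b\rangle$, which is the fundamental\nobreakdash-group argument to be carried out separately for that case.
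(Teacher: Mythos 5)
Your treatment of the routine parts is essentially sound: convexity is immediate, and the pairwise sublinks are handled by splitness just as in the paper (though note one small defect --- your ``outer balls'' taken in the one-point compactification are balls in $S^n$, not in $\R^n$, while the paper's definition of unlink requires balls in $\R^n$; one should instead push the relevant flat disks off the other component, which is easy but not what you wrote). The genuine gap is at the crux: the identification $[K_3]=[\iota_1,\iota_i]$ in $\pi_i\bigl(\R^n\setminus(K_1\cup K_2)\bigr)\cong\pi_i(S^1\vee S^i)$ is precisely the hard content of the whole theorem, and you explicitly assume it (``Granting this\dots''). Everything after that point (the universal-cover computation showing the Whitehead product equals $(1-t)$ times a generator of a free $\Z[t,t^{-1}]$-module, hence is nonzero) is correct but easy; everything before it (the wedge decomposition of the complement of a split pair, the irrelevance of the $S^{n-1}$ factors to $\pi_i$ when $i\le n-2$) is also correct but standard. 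What is missing is the geometric work of showing that the explicit ellipsoid $K_3$ --- which is disjoint from $K_1\cup K_2$ but whose spanning ball meets both --- represents the Whitehead product class rather than $0$, say by decomposing $S^i=\partial(D^1\times D^i)$ and tracking the pieces of $K_3$ through the equivalence. For $i=1$ this is exactly the commutator computation of Section~\ref{sec:proof2}, where the paper itself flags the corresponding homotopy as ``the hard part'' and notes that the argument does not extend to general $i$; your Whitehead-product formulation is the right generalization of it, but the proposal does not carry out the step that makes it a proof.

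It is worth seeing how the paper's argument sidesteps precisely this computation. Instead of evaluating the class of $K_3$ in the complement of the other two components, the paper argues by contradiction on $K_2$: if $L_{i,n}$ were an unlink, then $K_2$ would bound a ball missing $K_1\cup K_3$; a reflection trick (cut-and-paste across the hyperplane $x_n=0$, Lemma~\ref{lemma:disjt}) upgrades this to an immersed ball missing $K_1\cup K_3\cup B_1$; hence $K_2$ would be null-homotopic in the complement of the surgered sphere $K_3'$ (the upper half of $K_3$ capped with a disk inside $B_1$). But $K_2\cup K_3'$ is a generalized Hopf link (Lemma~\ref{lemma:hopf}), whose non-splitness needs only the elementary fact that the complement of a great sphere in $S^n$ deformation retracts onto the complementary great sphere --- a one-component $\pi_i$ computation, with no Whitehead products and no analysis of how $K_3$ sits in a two-component complement. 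If you wish to complete your route rather than adopt the paper's, the unproved identification is where all of the remaining work lies.
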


We know that $L_{1,3}$ consists of two circles and an ellipse forming the Borromean rings, a Brunnian link.  In the general case it is clear that the components are convex.  The following lemma shows that all sublinks of $L_{i,n}$ are unlinks.  We must show that no $L_{i,n}$ is an unlink, which we accomplish via 
Lemmas~\ref{lemma:dr}-\ref{lemma:disjt}.  

Henceforth, we adopt the convention for specifying knots and balls that all omitted coordinates are set equal to be zero.  
%Where unambiguous, we refer to the link above as $L$.

\begin{lemma}
Every proper sublink of $L_{i,n}$ is an unlink.
\end{lemma}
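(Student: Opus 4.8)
The plan is to verify the lemma by brute directness: for every proper sublink I exhibit, for each of its components, a bounding ball that is disjoint from the remaining component(s). The single-component sublinks are immediate, since each of $K_1, K_2, K_3$ bounds its obvious ball (a flat disk, a flat disk, and a solid ellipsoid, respectively) and so is an unknot. All the content therefore lies in the three two-component sublinks $K_1 \cup K_2$, $K_1 \cup K_3$, and $K_2 \cup K_3$; for each I must produce two balls, one bounded by each component and disjoint from the other.

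For each pair the mechanism is uniform. One of the two components already bounds its standard ball (the flat $(n-1)$-disk for $K_1$, the flat $(n-i)$-disk for $K_2$, or the solid ellipsoid for $K_3$) disjointly from its partner; this is a one-line check, since on the coordinate subspace where the two knots could possibly meet, their defining equations pin a common radius to incompatible values (for instance $B_1 \cap K_2$ would force $x_{i+1}^2+\dots+x_{n-1}^2$ to equal both $4$ and $9$). The remaining component is the one whose standard ball actually pierces its partner, and for it I replace the standard ball by a half-ellipsoidal cap that bulges into an auxiliary coordinate (using the paper's convention that omitted coordinates vanish).

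Concretely, for $K_1 \cup K_2$ I cap $K_2$ by the hemisphere $\{x_1^2 + x_{i+1}^2 + \dots + x_n^2 = 9,\ x_1 \ge 0\}$, which can meet $K_1$ only where the common slice forces $x_1^2 + x_{i+1}^2 + \dots + x_{n-1}^2$ to be simultaneously $9$ and $4$, so the radius mismatch kills the intersection. The two pairs involving the ellipsoid $K_3$ are the crux: there the only coordinate directions free for the sphere I wish to cap are exactly the directions into which $K_3$ (respectively $K_2$) already extends, so a round cap fails and a genuine stretch is needed. To cap $K_1$ away from $K_3$ I use the tall half-ellipsoid $\{x_1^2+\dots+x_{n-1}^2 + x_n^2/c^2 = 4,\ x_n \ge 0\}$ with $c>4$, and to cap $K_3$ away from $K_2$ I use $\{x_1^2+\dots+x_i^2 + x_{i+1}^2/c^2 + x_n^2/16 = 1,\ x_{i+1}\ge 0\}$ with $c>4$.

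The disjointness in each stretched case reduces to a clean containment: taking $c>4$ forces the solid region bounded by the cap to contain, on the relevant slice, the entire solid region bounded by the obstructing sphere, so the two boundary surfaces cannot meet. Checking these containments --- for example that $x_1^2+\dots+x_i^2 + x_n^2/16 \le 1$ implies $x_1^2+\dots+x_i^2 + x_n^2/c^2 < 4$ --- is the main (and only genuinely nontrivial) computation, and it is precisely where the chosen radii $2,3,4$ in the definition of $L_{i,n}$ enter. I would finish by noting that the same bulging construction goes through for every admissible $i$ and $n$, requiring only notational adjustments at the extreme value $i=n-2$.
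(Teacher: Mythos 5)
Your proof is correct: each of your caps is an embedded ball of the right dimension with the right boundary, and every disjointness check goes through (the hemisphere $\{x_1^2+x_{i+1}^2+\cdots+x_n^2=9,\ x_1\ge 0\}$ misses $K_1$ by the $9$-versus-$4$ mismatch; the tall cap over $K_1$ misses $K_3$ by your containment computation, which needs only $c>2$, so $c>4$ is safe; the cap over $K_3$ misses $K_2$ since points of $K_2$ satisfy $x_{i+1}^2/c^2+x_n^2/16\le 9/16<1$). However, your route is genuinely more thorough than the paper's. The paper exhibits only \emph{one} ball per pair --- $B_1$ misses $K_2$, $B_2$ misses $K_3$, $B_3$ misses $K_1$, a clean cyclic pattern --- and then declares each pair a split link, invoking without proof the standard fact that a split link of unknots is an unlink. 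You instead verify the paper's definition of an unlink in $\R^n$ directly, constructing a second bounding ball for the other component of each pair; this is exactly the content the paper's appeal to ``split implies unlink'' sweeps under the rug. What the paper's argument buys is brevity; what yours buys is self-containedness (everything is a radius-mismatch computation, with no unproven topological fact), and your observation that round caps genuinely fail --- forcing the stretch factor $c$ --- is accurate: a round hemisphere over $K_1$ of radius $2$ does intersect $K_3$, and any cap over $K_3$ bulging into $x_{i+1}$ with semi-axis at most $3$ hits $K_2$. One immaterial imprecision: since $B_1$ is the solid ball, $B_1\cap K_2$ forces $x_{i+1}^2+\cdots+x_{n-1}^2$ to be \emph{at most} $4$ while equal to $9$, not ``equal to both $4$ and $9$''; the contradiction stands either way.
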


\begin{proof}
The proper sublinks of $L_{i,n}$ are pairs of unknots.  For each pair, we observe that one knot bounds a ball disjoint from the other, and thus the pair forms a split link and must be an unlink.  

Explicitly, the round $(n \minus 1)$-ball $B_1 = \{x_{1}^2 + x_{2}^2 + \dots + x_{n-1}^2 \leq 4 \} \subset \R^n$ bounded by $K_1$ lies in the complement of the $(n \minus i \minus 1)$-sphere $K_2$.  Similarly, $K_2$ bounds a (round) ball $B_2$ disjoint from the $i$-dimensional ellipsoid $K_3$, and $K_3$ bounds an ellipsoidal ball $B_3$ disjoint from $K_1$. 
\end{proof}

 The next lemma, stated without proof, relays a standard fact about spheres.  

\begin{lemma}
\label{lemma:dr} 
Let $S^n \subset \R^{n+1}$ be a round sphere centered at the origin, and let $V$ be a linear subspace of $\R^{n+1}$.  Consider the great spheres $X_1=V \cap S^n$  and $X_2=V^\perp \cap S^n$.  Then, $S^n -X_1$ deformation retracts onto $X_2$.
\end{lemma}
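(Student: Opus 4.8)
The plan is to produce the deformation retraction explicitly using the orthogonal splitting $\R^{n+1} = V \oplus V^\perp$. First I would write each point $p \in S^n$ as a pair $(u,w)$ with $u \in V$ and $w \in V^\perp$, so that $|u|^2 + |w|^2 = 1$. In these coordinates $X_1 = V \cap S^n$ is precisely the set of points with $w = 0$ (and $|u|=1$), while $X_2 = V^\perp \cap S^n$ is the set with $u = 0$ (and $|w|=1$). Consequently $S^n - X_1$ is exactly the set of those $(u,w)$ on the sphere with $w \neq 0$, which is the key structural observation that drives everything.

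Next I would define the homotopy $F \colon (S^n - X_1) \times [0,1] \to \R^{n+1}$ that linearly contracts the $V$-component and then renormalizes onto the sphere:
$$
F\big((u,w),\, t\big) = \frac{\big((1-t)u,\; w\big)}{\big|\big((1-t)u,\; w\big)\big|}.
$$
Because $w \neq 0$ on $S^n - X_1$, the denominator satisfies $\big|\big((1-t)u,\,w\big)\big| \geq |w| > 0$, so $F$ is well defined and jointly continuous in $(u,w,t)$. Its $V^\perp$-component is always a positive multiple of $w$ and hence never vanishes, which shows that $F$ maps into $S^n - X_1$ for every $t$; in particular the homotopy never passes through $X_1$. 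I would then verify the three defining properties: at $t=0$ the denominator equals $|(u,w)| = 1$, so $F(\cdot,0)$ is the identity; at $t=1$ we obtain $F\big((u,w),1\big) = \big(0,\, w/|w|\big) \in X_2$, so the time-one map lands in $X_2$; and for a point already in $X_2$ we have $u=0$ and $|w|=1$, whence $F\big((0,w),t\big) = (0,w)$ for all $t$, so $X_2$ is fixed pointwise. Together these establish that $F$ is a (strong) deformation retraction of $S^n - X_1$ onto $X_2$.

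I do not expect a serious obstacle here; the only point requiring genuine care is confirming that the retraction avoids $X_1$ at every intermediate time, and this follows at once from the fact that contracting the $V$-component leaves the nonzero $V^\perp$-component untouched up to positive scaling. The degenerate endpoints are absorbed by the same formula: when $V = \R^{n+1}$ we have $X_1 = S^n$, so $S^n - X_1 = \emptyset$ and the claim is vacuous, while when $V = \{0\}$ we have $X_1 = \emptyset$, $X_2 = S^n$, and $F$ reduces to the identity.
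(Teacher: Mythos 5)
Your proof is correct and complete. Note that the paper itself states this lemma \emph{without} proof, calling it a standard fact about spheres, so there is no argument in the paper to compare against; your explicit homotopy $F\bigl((u,w),t\bigr) = \bigl((1-t)u,\,w\bigr)/\bigl|\bigl((1-t)u,\,w\bigr)\bigr|$ is exactly the canonical construction that the authors are implicitly invoking, and you verify every required property: well-definedness (since $|w|>0$ on $S^n - X_1$), the identity at $t=0$, landing in $X_2$ at $t=1$, $X_2$ fixed pointwise, and---the one point where care is genuinely needed---that the homotopy never passes through $X_1$, because the $V^\perp$-component is always a positive multiple of $w$. Your treatment of the degenerate cases $V=\{0\}$ and $V=\R^{n+1}$ is a sensible addition, since the paper applies the lemma only in nondegenerate situations but states it in full generality.
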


In particular, if we delete an unknotted $S^{k-1} \subset S^n$ from $S^n$, the result deformation retracts onto an $S^{n-k}$.

Next, we note that $K_3 - (B_1 \cap K_3)$ is not connected, since the intersection $(B_1 \cap K_3)$ is exactly the equator $x_n = 0$ of the $i$-dimensional ellipsoid $K_3$.  The two components of $K_3 - (B_1 \cap K_3)$ are the upper and lower open halves of $K_3$.  We use the upper half for our next definition.  

Let $K_3'$ be the $i$-dimensional subset of $K_3 \cup B_1$ formed by taking the open upper half of the ellipsoid $K_3$ and closing it at the bottom with the disk $B_1 \cap B_3$; in coordinates,
\begin{align*}
K_3' = &  \left\{(x_1, x_2, \dots x_n): x_1^2 + x_2^2 + \dots + x_{i}^2 + \tfrac{x_{n}^2}{16} = 1, x_n \geq 0 \right\} \\  
& \; \cup \left\{(x_1, x_2, \dots x_n): x_1^2 + x_2^2 + \dots + x_{i}^2 \leq 1, x_n=0 \right\} .
\end{align*}

\begin{lemma}
$K_3' \cup K_2$ is a generalized Hopf link and is not a split link. 
\label{lemma:hopf}
\end{lemma}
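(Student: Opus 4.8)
The plan is to check, one clause at a time, that $K_3' \cup K_2$ satisfies the definition of a generalized Hopf link, and then to rule out splitness by a linking-number computation. Recall that $K_3'$ is a topological $i$-sphere (the upper hemisphere of the ellipsoid $K_3$ capped by its equatorial disk) bounding the $(i+1)$-ball $B_3^+ = B_3 \cap \{x_n \geq 0\}$, while $K_2$ is the round $(n-i-1)$-sphere bounding the round $(n-i)$-ball $B_2$. Since $i + (n-i-1) = n-1$, the component dimensions are exactly those a generalized Hopf link requires. First I would record disjointness: a point of $K_2 \cap K_3'$ would have to satisfy $x_1 = \dots = x_i = 0$ (forced by $K_2$) together with $x_{i+1} = \dots = x_{n-1} = 0$ (forced by $K_3'$), leaving only $x_n$ free, and the defining equations are then incompatible on each of the two pieces of $K_3'$.

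Next I would locate the two intersection points. Imposing the constraint $x_{i+1} = \dots = x_{n-1} = 0$ of $B_3^+$ on $K_2$ forces $x_n^2 = 9$, and only $x_n = 3$ survives the condition $x_n \geq 0$; the resulting point $(0, \dots, 0, 3)$ lies in the interior of $B_3^+$ because $9/16 < 1$. Hence $B_3^+ \cap K_2$ is this single point. Symmetrically, $B_2 \cap K_3'$ collapses to the origin, which lies in the interior of $B_2$ and of the equatorial disk of $K_3'$. To confirm transversality I would compare tangent spaces at each point: at $(0,\dots,0,3)$ the ball $B_3^+$ is tangent to $\mathrm{span}(\partial_{x_1}, \dots, \partial_{x_i}, \partial_{x_n})$ and $K_2$ to $\mathrm{span}(\partial_{x_{i+1}}, \dots, \partial_{x_{n-1}})$; at the origin the flat disk of $K_3'$ is tangent to $\mathrm{span}(\partial_{x_1}, \dots, \partial_{x_i})$ and $B_2$ to $\mathrm{span}(\partial_{x_{i+1}}, \dots, \partial_{x_n})$. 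In each case the two spans have complementary dimension and together fill $\R^n$, so the intersections are transverse. This completes the verification that $K_3' \cup K_2$ is a generalized Hopf link.

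The remaining, and main, task is to show the link is not split, and I expect this to be the crux. The single transverse point of $B_3^+ \cap K_2$ says that the linking number of $K_2$ with $K_3'$ is $\pm 1$; phrased homologically, intersection with the chain $B_3^+$ sends $[K_2]$ to $\pm 1$ under the pairing $H_{n-i-1}(\R^n \setminus K_3') \to \Z$. If $K_3' \cup K_2$ were split, an $(n-1)$-sphere would separate the two components, confining $K_2$ to a ball disjoint from $K_3'$; then $[K_2] = 0$ in $H_{n-i-1}(\R^n \setminus K_3')$, so $K_2$ bounds a chain $C$ avoiding $K_3'$, and the standard boundary relation for intersection numbers gives $B_3^+ \cdot K_2 = \pm\, K_3' \cdot C = 0$, contradicting the value $\pm 1$. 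The care required is precisely to justify that this intersection count is well defined and isotopy-invariant — that any two chains bounded by $K_3'$ yield the same count, which follows from $H_{i+1}(\R^n) = 0$ — and here I would, if needed, lean on the description of sphere complements in Lemma~\ref{lemma:dr} to pin down $H_{n-i-1}(\R^n \setminus K_3')$.
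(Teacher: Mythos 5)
Your verification of the generalized Hopf link clause is correct and is essentially the paper's own argument: the paper simply asserts the two transverse one-point intersections $B_2 \cap K_3' = \{0\}$ and $B_3' \cap K_2 = \{(0,\dots,0,3)\}$ (your $B_3^+$ is the paper's $B_3'$), which you compute and check in detail. For non-splitness, however, you take a genuinely different route. The paper lifts the link to $S^n$ by stereographic projection, isotopes $p(K_3')$ to the great $i$-sphere $G_3$ complementary to $G_2 = p(K_2)$ inside a great $(i+1)$-sphere, and then uses Lemma~\ref{lemma:dr} to conclude that $G_3$ generates $\pi_i(S^n - G_2) \cong \Z$, hence bounds no $(i+1)$-ball avoiding $G_2$. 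Your linking-number argument skips both the lift and the isotopy, and it has a real side benefit: it obstructs $K_2$ in the complement of $K_3'$, which (since null-homotopic spheres are null-homologous) is exactly the form of nontriviality invoked at the end of the proof of Theorem~\ref{thm:cbl}, whereas the paper's computation is phrased with the two components in the opposite roles. One small remark: your fallback to Lemma~\ref{lemma:dr} to compute $H_{n-i-1}(\R^n - K_3')$ is both unnecessary and unavailable ($K_3'$ is not round); the boundary relation $B_3^+ \cdot \partial C = \pm\, (\partial B_3^+) \cdot C$ is all your contradiction uses.

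There is one genuine gap: the step ``an $(n-1)$-sphere would separate the two components, confining $K_2$ to a ball disjoint from $K_3'$.'' The paper's definition of a split link allows an arbitrary embedded, possibly wild, $(n-1)$-sphere $\Sigma$, and in the topological category such a sphere need not bound a ball on either side (the Alexander horned sphere and its higher-dimensional analogues; Brown's generalized Schoenflies theorem requires local flatness). Even for a tame $\Sigma$, $K_2$ could lie in the unbounded complementary component, which is never a ball. What your argument actually needs is weaker and always true: that $[K_2] = 0$ in $H_{n-i-1}(A)$, where $A$ is the component of the complement of $\Sigma$ containing $K_2$. This follows from Alexander duality applied to $\Sigma$ itself: $\tilde{H}_q(S^n - \Sigma) \cong \check{H}^{n-q-1}(\Sigma)$ vanishes for $0 < q < n-1$, and $0 < n-i-1 < n-1$ precisely because $1 \leq i \leq n-2$. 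So the repair is to pass to $S^n$ (where $H_{i+1}(S^n) = 0$ keeps the intersection count well defined, and where the bounded/unbounded distinction disappears), replace ``ball'' by ``complementary component of $\Sigma$,'' and cite duality for the vanishing; with that substitution, plus the general-position care you already flagged, your proof goes through.
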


\begin{proof}
Figure~\ref{fig:db3} depicts this link for $n=3$; notice that $K_3'$ orthogonally intersects $B_2$ at the origin, so this a Hopf link.  In arbitrary dimensions, the same phenomenon occurs:   $K_3'$ orthogonally intersects $B_2$ in only one point, the origin, and we have a generalized Hopf link.  

(By letting $B_3'$ be the portion of $B_3$ with $x_n \geq 0$, i.e., the portion bounded by $K_3'$, we also see that $K_2$ orthogonally intersects $B_3'$ in only one point,  $q_+=(0,0, \dots ,0,0,3)$.)

Now we show the link is not split.  Although $K'_3 \cup K_2$ lies in $\R^n$, our argument is more easily made in $S^n$.  If we include $\R^n$ inside $\R^{n+1}$ by fixing $x_{n+1}=0$, then we may use stereographic projection $p$ to lift $\R^n$ to the $n$-sphere $S^n \subset \R^{n+1}$ of radius 3, centered at the origin.  
(Usually the unit sphere is used for stereographic projection, but in this case it is more convenient to use the radius of $K_2$.)  We will show the lift of link $K'_3 \cup K_2$ is not split. 

\begin{figure}
	\centering
	\includegraphics[width=0.6\textwidth]{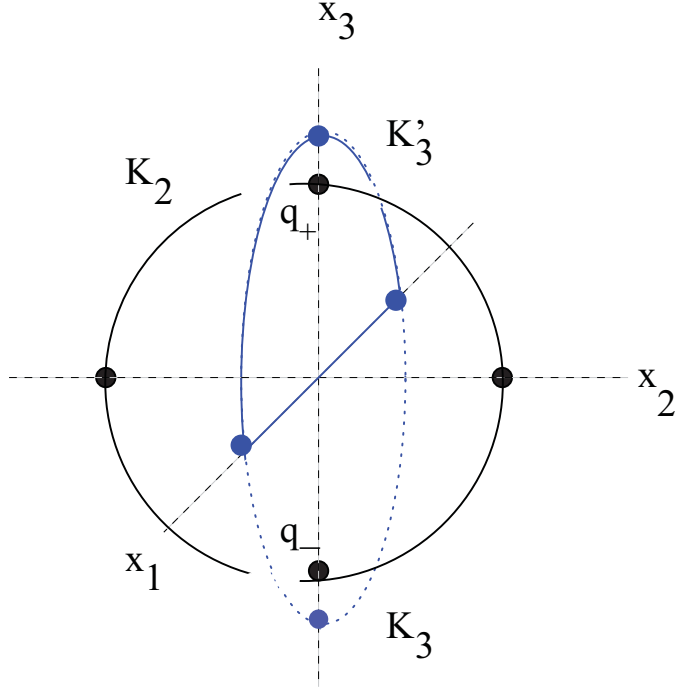}
	\caption{Here is how the argument looks for $L_{1,3} \subset \R^3$.  Ball $B_2$ has dimension $n-i=2$; it forms the round disk bounded by $K_2$ in the $x_2x_3$-plane; the $(i+1=2)$-ball $B_3'$ is the upper half-disk bounded by $K_3'$ in the $x_1x_3$-plane.  Let $p$ be the map via stereographic projection from  $\R^3$ to $S^3 \subset \R^4$.  Then we see that $p(K_2)=G_2$, $p(K'_3)=G'_3$, $p^{-1}(G_3)$ is the 
$x_1$-axis, and $p^{-1}(\Sigma)$ is the $x_1x_3$-plane.  Note that $q_+$ is `inside' $K_3'$ in the $x_1x_3$-plane, whereas $q_-$ is `outside'.}
	\label{fig:db3}
\end{figure}

Because $K_2$ has radius 3 and is centered at the origin, it is fixed by the lift $p$.  We consider these subsets of $S^n$:
\begin{itemize}
\item $G_2$, the great $(n\minus i \minus 1)$-sphere $p(K_2)$
\item $G_3$, the great $i$-sphere $\left\{x_{1}^2+x_{2}^2 + \dots + x_i^2 + x_{n+1}^2 = 9 \right\}$ in $S^n$ complementary to $G_2$
\item $G_3'$, the lift $p(K_3')$, and
\item $\Sigma$, the great $(i+1)$-sphere $\left\{ x_{1}^2+x_{2}^2 + \dots + x_i^2+x_n^2+ x_{n+1}^2 = 9 \right\}$ 
\end{itemize}

We note that both $G_3$ and $G_3'$ are unknotted $i$-spheres contained in $\Sigma \subset S^n$; the former inclusion is immediate while the latter follows since $K_3'$ lies in the $(i+1)$-dimensional subspace $p^{-1}(\Sigma)$.  We observe that $G_2 \cap \Sigma$ is a great 0-sphere consisting of two points $\q_\pm = (0,0, \dots, 0, \pm 3,0)$ which are disjoint from  $G_3 \cup G_3'$.  Note that $\q_\pm = p(q_\pm)$.

{\it Claim:} $G_3'$ is isotopic to $G_3$ in $\Sigma - G_2$ (which implies they are isotopic in $S^n - G_2$).

To prove the claim, it suffices to show both $G_3$ and $G_3'$ separate $\q_+$ from $\q_-$ in $\Sigma$, i.e, they can both be oriented to contain $\q_+$ on the inside and $\q_-$ on the outside.  This follows immediately for $G_3$ since it is a great $i$-sphere disjoint from antipodal points $\q_\pm$ in $\Sigma$.

Now we show $G_3'$ separates $\q_\pm$.  Recall that $K_3'$ bounds $B_3'$, which contains $q_+$ but not $q_-$.  Also note that $B'_3 \subset p^{-1}(\Sigma)$.  This property is preserved under homeomorphism $p$ since $p$ is a bijection and $B'_3 \subset p^{-1}(\Sigma)$ and $p(B'_3) \subset \Sigma$. The $i$-sphere $G_3'$ bounds $p(B_3')$, which contains $\q_+$ but not $\q_-$.  Thus $G_3'$ separates $\q_\pm$ and the claim holds.  Since $G_3$ and $G_3'$ are isotopic in $\Sigma - G_2$, they are also isotopic in the larger space $S^n - G_2$ which contains $\Sigma - G_2$.

%\vspace{0.5in}
%{\sf old version:}
%
%Both $G_3$ and $G_3'$ are separating in $\Sigma$.  We want to show that they can both be oriented to contain $\q_+$ on the inside and $\q_-$ on the outside.  Let $p(B_3')$ be called $D_3'$.
%We know $p$ is a bijection between $B_3'$ and $D_3'$.  Note that $D_3'$ is a ball bounded by $G_3'$.  We also note that $D_3'$ is contained in $\Sigma$ since $B_3' \subset p^{-1}(\Sigma)$. 
%
%We also know that $B_3' \cap K_2 = q_+$.  Then clearly $B_3' \cap K_2 \cap p^{-1}(\Sigma)= q_+$.  Now since $p$ is one to one we see that $p(B_3') \cap p(K_2) = p(q_+) = \q_+$ in other words since $q_+$ is fixed by $p$, $D_3' \cap G_2 = \q_+$.  This means that $G_3'$ bounds a ball contained in $\Sigma$ that intersects $G_2$ in exactly the point $\q_+$.  
%
%We may thus orient both $G_3$ and $G_3'$ so that the inside contains $\q_+$ and the outside contains $\q_-$. 
%The two unknots $G_3$ and $G_3'$ must therefore be isotopic in $\Sigma - G_2$.  Clearly then they are also isotopic in the larger space $S^n - G_2$ which contains $\Sigma - G_2$.
%
%\vspace{0.5in}

We have now shown that $K_2 \cup K_3'$ lifts to a link isotopic to $G_2 \cup G_3$.
To prove the former is not split, we show the latter is not split by showing that $G_3$ does not bound an $(i+1)$-ball in the complement of $G_2$.   
Lemma~\ref{lemma:dr} assures us that $S^n - G_2$ deformation retracts onto $G_3$; this implies that $\pi_i(S^n - G_2) =\pi_i(G_3)=\Z$.  (It is well known that the $i$th homotopy group of an $i$-sphere is $\Z$; see, for example, \cite{MR1867354}%[Hatcher]
).  Since $G_3$ is fixed by the deformation retract and generates $\pi_i$ after the deformation retract, it must also generate $\pi_i$ before.  As a nontrivial element of $\pi_i(S^n - G_2)$, $G_3$ cannot bound a $(i+1)$-ball in $S^n - G_2$.  Therefore, $G_2 \cup G_3 \subset S^n$ is not a split link, and neither is $K_2 \cap K_3' \subset \R^n$.
\end{proof}

\begin{lemma}  \label{lemma:disjt}
If $K_2$ bounds an embedded $(n-i)$-ball $D_2$ which does not intersect $K_1 \cup K_3$, then $K_2$ bounds an immersed $(n-i)$-ball $D_2'$ that does not intersect $K_1 \cup K_3 \cup B_1$.
\end{lemma}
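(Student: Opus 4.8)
The plan is to start from the hypothesized embedded ball $D_2$, which meets $K_1\cup K_3$ not at all but may puncture the flat disk $B_1$, and to surger away those punctures. First I would perturb $D_2$ rel $\partial D_2=K_2$, by a homotopy supported away from $K_1\cup K_3$, so that $D_2$ is transverse to the hyperplane $H=\{x_n=0\}$ containing $B_1$. Because $D_2$ avoids $K_1=\partial B_1$ (the radius-$2$ sphere in $H$) while $K_2\cap H$ lies at radius $3$, the intersection $M:=D_2\cap B_1$ is a closed $(n-i-1)$-manifold lying in the open disk $\{x_1^2+\dots+x_{n-1}^2<4,\ x_n=0\}$; moreover $M$ is disjoint from the equator $E:=K_3\cap B_1=\{x_1^2+\dots+x_i^2=1,\ x_{i+1}=\dots=x_{n-1}=x_n=0\}$, since $D_2\cap K_3=\varnothing$. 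The goal becomes to remove $M$ entirely.

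To remove a single component $M_0$ of $M$, I would perform a finger move that reroutes the two sheets of $D_2$ meeting along $M_0$: instead of passing straight through $B_1$ in the $\pm x_n$ direction, they are to join up by going \emph{around} the rim $K_1$. Push the lower sheet (at $x_n<0$) radially outward, carry it over the edge at radius strictly greater than $2$ (so it crosses $x_n=0$ off of $B_1$ and off of $K_1$), and bring it back in above. This replaces a neighborhood of $M_0$ in $D_2$ by a tube $M_0\times(\text{arc})$ that is disjoint from $B_1$ and from $K_1$. Since we need only produce an \emph{immersed} ball $D_2'$, I may ignore any new self-intersections of the rerouted sheets, or intersections with the remainder of $D_2$; only $K_1\cup K_3\cup B_1$ must be avoided.

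The hard part is arranging this reroute to miss $K_3$, which threads through $B_1$ precisely along $E$, so that near $H$ the surface $K_3$ sits at radius just under $1$ in the subspace $\{x_{i+1}=\dots=x_{n-1}=0\}$. A naive radial push of $M_0$ from small radius out past radius $2$ sweeps an $(n-i)$-dimensional track which, by dimension count, generically meets the $i$-sphere $K_3$; the obstruction to avoiding it is exactly the linking number $\operatorname{lk}(M_0,E)$ computed in $H\cong\R^{n-1}$, i.e.\ the class of $M_0$ in $H_{n-i-1}(H\setminus E)\cong\Z$. Here is where I would use the embedded hypothesis: $M_0$ is a two-sided, null-homologous, closed hypersurface in the \emph{ball} $D_2$ (as $H_{n-i-1}(D_2)=0$), so it separates $D_2$; let $R$ be the component of $D_2\setminus M_0$ not containing $K_2$, a compact region with $\partial R=M_0$. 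Since $R\subset D_2$ is disjoint from $K_3$, a meridian comparison gives $\operatorname{lk}(M_0,E)=\operatorname{lk}(M_0,K_3)=0$. Consequently $M_0$ bounds an $(n-i)$-chain in $H\setminus E$; pushing $M_0$ across a smooth representative of this chain carries it out past radius $2$ without meeting $E$, after which it clears $K_1$ and the finger move stays clear of a tubular neighborhood of $E$, hence of $K_3$ near $H$. Establishing this vanishing, and converting it into an explicit $K_3$-avoiding reroute, is the main obstacle.

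Finally, I would remove the components of $M$ one at a time: taking $M_0$ innermost (so its bounding region $R\subset D_2$ meets $B_1$ only in $M_0$), each surgery strictly decreases the number of components, and I would induct. The result is an immersed $(n-i)$-ball $D_2'$ with $\partial D_2'=K_2$ disjoint from $K_1\cup K_3\cup B_1$, as required. I expect the embeddedness of $D_2$ to be used only to produce the bounding regions $R$ and the resulting linking vanishing; the relaxation to an immersion is exactly what frees us from having to control self-intersections created during the reroute.
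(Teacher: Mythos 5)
Your overall setup---intersect $D_2$ with $B_1$, take an innermost component $M_0$, and use the region $R\subset D_2$ that it bounds---matches the paper's, but the surgery step is where your proposal has genuine gaps. First, the homological bookkeeping fails when $i=1$: there $E=K_3\cap H$ is a $0$-sphere (two points), so $H_{n-2}(H\setminus E)\cong\Z\oplus\Z$, not $\Z$, and $\operatorname{lk}(M_0,K_3)=0$ only forces the two winding numbers of $M_0$ about the points of $E$ to be \emph{equal} (the ellipse $K_3$ crosses $H$ upward at one point and downward at the other, so the signs are opposite), not zero. A large sphere in $H$ enclosing both points of $E$ has $\operatorname{lk}(M_0,K_3)=0$ yet does not bound in $H\setminus E$, so the inference ``$\operatorname{lk}=0$, hence $M_0$ bounds an $(n-i)$-chain in $H\setminus E$'' is false in that case. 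Second, even where that inference does hold ($i\geq 2$), the mechanism is conflated: a chain that $M_0$ bounds is a null-cobordism, which would let you \emph{cap off} $M_0$, not ``carry it out past radius 2''; being pushable past the rim is the different condition that $M_0$ be cobordant in $H\setminus E$ to a cycle supported outside radius $2$, and for $i=1$ these two conditions genuinely differ. Third---and this is the gap you yourself call ``the main obstacle'' and never close---homology classes and singular chains do not directly produce the map of a cylinder (or of a compact manifold with boundary $M_0$) that is needed to assemble the immersed ball $D_2'$; converting the homological vanishing into an actual $K_3$-avoiding tube is left unargued.

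The paper bypasses all of this with one elementary observation your proposal misses: every component of the link is invariant under the reflection $x_n\mapsto -x_n$. Having chosen an innermost component $F_j$ of $D_2\cap B_1$ with inside region $U$ (your $R$), the paper simply replaces $U$ by its fold $f(U)$, where $f(x_1,\dots,x_n)=(x_1,\dots,x_{n-1},|x_n|)$. The symmetry guarantees $f(U)$ still misses $K_1\cup K_3$; the folded piece meets $H$ only along the seam $F_j$ and possibly outside $B_1$; and a small perturbation of $(D_2-U)\cup f(U)$ strictly decreases the number of components of intersection with $B_1$, so induction finishes. In other words, the region $R$ you identify is exactly the right object, but it should be used directly as the new cap after folding it across $H$---no linking numbers, no obstruction theory, and none of the representability issues your route runs into.
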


\begin{proof}
Let $D_2$ be an embedded $(n-i)$-ball bounded by $K_2$ that is disjoint from $K_1 \cup K_3$ and intersects $B_1$ transversally.  We want to show there is an immersed $(n-i)$-ball that is disjoint from $B_1$.  If $D_2$ is disjoint from $B_1$, we are done.  
If not, note that $B_1$ has dimension $n-1$ and thus has codimension 1 in $\R^n$.
%, and $D_2$ has dimension $n-i$.
Since $\partial B_1 \cap D_2 = \emptyset$ and $\partial D_2 \cap B_1 = \emptyset$, the set $B_1 \cap D_2$ must be a collection of disjoint closed manifolds $\{F_1, F_2, \dots  \}$ of dimension $n-i-1$.  

We note that since $D_2$ is a ball, each of the $F_i$ are separating in $D_2$.
We may define the {\em outside} of $F_i$ to be the component of $D_2 - F_i$ which contains $K_2$ and the {\em inside} to be the other component. 

Since $D_2$ is compact, we may assume that it has a finite number of critical points with respect to $x_n$. Since $B_1$ lies in the plane $x_n = 0$, we can conclude that the intersection $B_1 \cap D_2$ has a finite number of components.  
Because there are a finite number of intersections, we may take an innermost component $F_j$ (one which has no other $F_i$ inside of it in $D_2$).  Let $U$ be the component of $D_2 - F_j$ inside of $F_j$.

Let $f(x_1, x_2, \dots x_{n-1}, x_n)=(x_1, x_2, \dots x_{n-1}, |x_n|)$.
Note that $p$ is a point in $K_i$ if and only if $f(p)$ is a point in $K_i$ since each of the knots is symmetric with respect to $x_n$.
This implies that if $U \subset D_2$ then since $U \cap (K_1 \cup K_3) = \emptyset $, we know $f(U) \cap (K_1 \cup K_3) = \emptyset$.

Then we replace $D_2$ by $(D_2 - U) \cup f(U)$.  See Figures~\ref{fig:int1}-\ref{fig:int2}.  The new ball is not in general position with respect to $B_1$.  We may take a small deformation of the new (possibly no longer embedded)
ball to decrease the number of intersections with $B_1$.  We repeat, reducing the number of intersections each time, until we have a new ball $D_2'$ with boundary
$K_2$ that is disjoint from $B_1$ and $K_1 \cup K_3$.
\end{proof}

\begin{figure}
	\centering
	\includegraphics[width=0.5\textwidth]{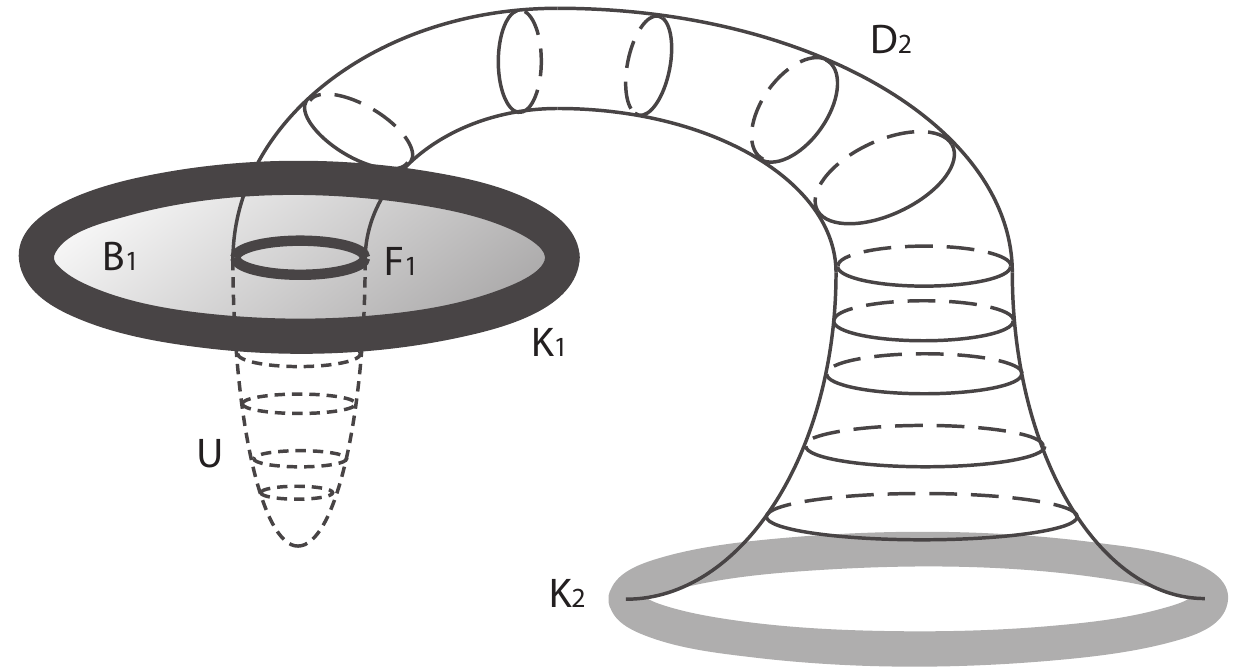}
	 \caption{We may cut out $U$ and paste in $f(U)$ to eliminate the intersections of $B_1$ and $D_2$.  
In this figure, set in $\R^3$, we have simplified the link by omitting $K_3$.}
	\label{fig:int1}
\end{figure}

\begin{figure}
	\centering
	\includegraphics[width=0.5\textwidth]{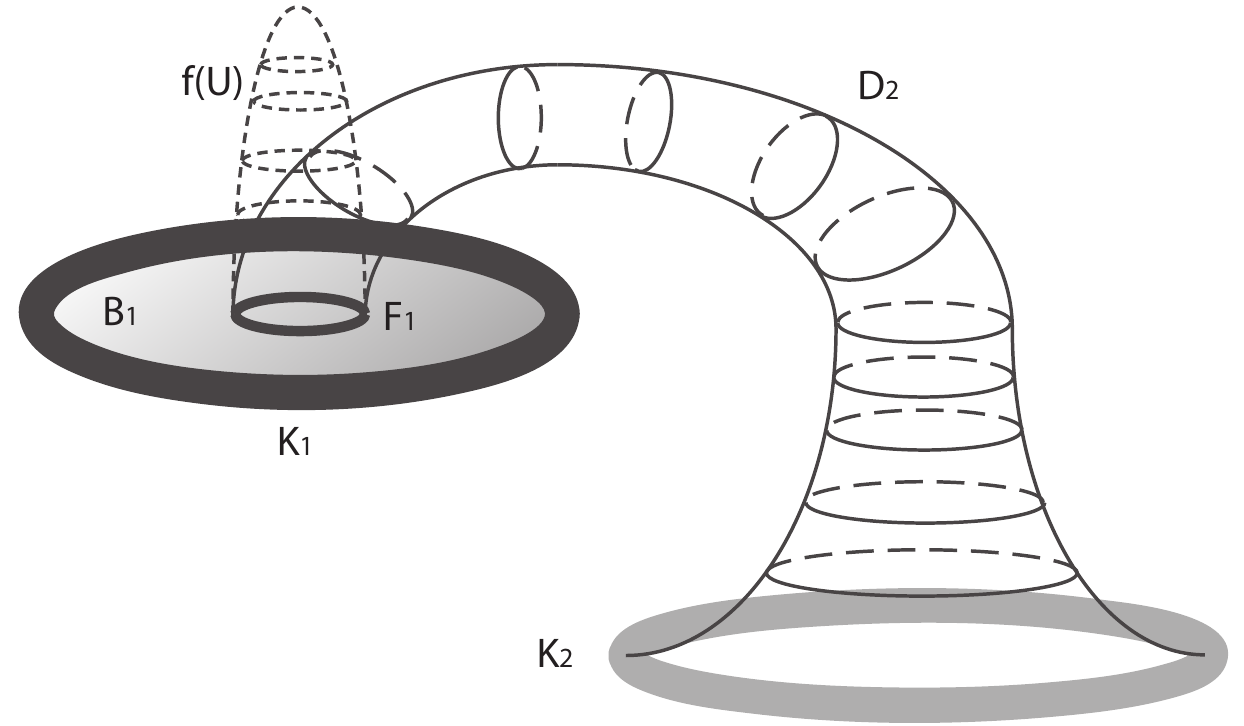}
	\caption{We reflect $U \subset D_2$ across $B_1$ to reduce the number of intersections.}
	\label{fig:int2}
\end{figure}

\begin{proof}[Proof of Theorem~\ref{thm:cbl}] We now complete the proof of our main theorem.  Observe that if $L_{i,n}$ is an unlink, then $K_2$ bounds a ball $D_2$ disjoint from $K_1 \cup K_3$, but Lemma~\ref{lemma:disjt} shows that this implies $K_2$ also bounds an immersed ball $D_2'$ that does not intersect $K_1 \cup K_3 \cup B_1$.  The existence of $D_2'$ implies that $K_2$ is homotopically trivial in the complement of $K_1 \cup K_3 \cup B_1$.  Since $K_3' \subset K_3 \cup B_1$, we know $K_2$ must also be homotopically trivial in the complement of $K_3'$.   This, however, contradicts Lemma~\ref{lemma:hopf}. 
\end{proof}

\smallskip
We note that by replacing our coefficients $(1, 4, 9,$ and $16)$ in the $L_{i,n}$ formulas by coefficients that are arbitrarily close to each one, say $(1, 1+ \epsilon, 1+ 2\epsilon,$ and $1+ 3\epsilon)$, we obtain embeddings of the same links containing two round spheres and a third that is arbitrarily close to being a round sphere.  Thus, although \cite{MR2384264} %[monthly ] 
shows that no Brunnian link can be built out of round components, this paper provides an infinite collection of Brunnian links in which two of the components are round and the third is arbitrarily close to being round.

\section{A special case} \label{sec:proof2}

We have now proven our theorem, but a different proof technique exists for a subset of the links.  For the links $L_{1,n}$, one of the components of the link is homeomorphic to a circle, and we can utilize the fundamental group instead for our proof.  Here we allow $K_3$ to be the (elliptical) circle, but symmetry dictates that the proof holds if any of the other components had been the circle.  

\begin{theorem}
$L_{1,n}$ is a convex Brunnian link.
\label{thm:davis}
\end{theorem}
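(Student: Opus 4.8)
The plan is to reduce the theorem to a statement about the fundamental group of the complement of two of the three components. Since the preceding lemma shows that every proper sublink of $L_{1,n}$ is an unlink, and the three components are manifestly convex, it remains only to prove that $L_{1,n}$ itself is not an unlink. Here the special feature of the case $i=1$ enters: the component $K_3$ is homeomorphic to $S^1$. If $L_{1,n}$ were an unlink, then $K_3$ would bound an embedded $2$-ball disjoint from $K_1 \cup K_2$, and in particular $K_3$ would be null-homotopic in $\R^n \setminus (K_1 \cup K_2)$. Thus it suffices to exhibit $[K_3]$ as a nontrivial element of $\pi_1\!\left(\R^n \setminus (K_1 \cup K_2)\right)$.

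First I would compute this fundamental group. The balls $B_1$ and $B_2$ show, exactly as in the sublink lemma, that $K_1$ and $K_2$ are split, unknotted $(n\minus 2)$-spheres, each of codimension $2$; by Lemma~\ref{lemma:dr} the complement of a single such sphere is homotopy equivalent to a circle, so it contributes one meridian generator. Choosing a round $(n\minus 1)$-sphere that separates $K_1$ from $K_2$ (its complementary collar is connected and, since $n \ge 3$, simply connected), van Kampen's theorem yields $\pi_1\!\left(\R^n \setminus (K_1 \cup K_2)\right) \cong \Z * \Z = \langle a, b\rangle$, the free group on a meridian $a$ of $K_1$ and a meridian $b$ of $K_2$. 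This computation is uniform in $n$.

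Next I would identify $[K_3]$ with the commutator $aba^{-1}b^{-1}$. The cleanest route exploits that $L_{1,n}$ genuinely generalizes the Borromean rings: intersecting with the $3$-dimensional slice $Q = \{x_3 = \dots = x_{n\minus 1} = 0\}$ carries $K_1$ and $K_2$ to round circles and leaves $K_3$ unchanged, so $Q \cap L_{1,n}$ is precisely the classical Borromean rings $L_{1,3}$. At a point of $K_j \cap Q$ the normal $2$-plane to $K_j$ lies inside $Q$, so a meridian of the circle $K_j \cap Q$ also represents a meridian of the sphere $K_j$; hence the inclusion $Q \setminus (K_1 \cup K_2) \hookrightarrow \R^n \setminus (K_1 \cup K_2)$ sends a free basis of $\pi_1$ to the basis $\{a,b\}$ and is therefore an isomorphism. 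Since $K_3 \subset Q$, the class $[K_3]$ computed in $\R^n$ equals its class in $Q$, which is the well-known commutator $aba^{-1}b^{-1}$ of the Borromean rings.

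The main obstacle is precisely this identification of $[K_3]$. The difficulty is that all of the abelian linking information vanishes: $K_3$ meets $B_1$ in two points of opposite sign and is disjoint from $B_2$, so a naive signed-intersection count gives the trivial word. The nonabelian content is hidden in the clasp $B_1 \cap B_2$, a nonempty $(n\minus 2)$-ball; the two points where $K_3$ crosses $B_1$ lie on opposite sides of $B_2$, so the two meridian loops differ by conjugation by $b$, producing the commutator rather than the identity. Making this rigorous either demands careful based-loop bookkeeping across the non-disjoint Seifert membranes $B_1$ and $B_2$, or, more cleanly, the slice reduction above, which transports the computation to the classical $3$-dimensional picture. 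Once $[K_3] = aba^{-1}b^{-1} \neq 1$ in $\langle a,b\rangle$, we conclude that $K_3$ bounds no disk in the complement of $K_1 \cup K_2$, so $L_{1,n}$ is not an unlink and is therefore a convex Brunnian link. Finally, I note why the argument is special to $i=1$: for larger $i$ the natural component to test is $K_2$, a sphere of dimension at least $2$, which is simply connected, so $\pi_1$ cannot detect it and one is forced into the higher-homotopy argument of the general proof.
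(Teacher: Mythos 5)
Your proof is correct and follows the paper's overall strategy: both reduce the theorem to showing $[K_3] \neq 1$ in $\pi_1\bigl(\R^n - (K_1 \cup K_2)\bigr)$, both compute that group as $\Z * \Z$ by the same route (Lemma~\ref{lemma:dr} for a single codimension-two round sphere, then Seifert--Van Kampen across a sphere splitting $K_1$ from $K_2$), and both finish by exhibiting $[K_3]$ as a commutator of meridians. Where you genuinely diverge is the mechanism for that last identification. The paper stays in $\R^n$: it writes down explicit based loops $\alpha, \beta$ in the $x_1x_n$-plane and argues directly that $\gamma K_3 \gamma^{-1} \simeq \alpha\beta^{-1}\alpha^{-1}\beta$ by splitting the ellipse into its top and bottom halves. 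You instead slice by $Q = \{x_3 = \cdots = x_{n-1} = 0\}$, observe that $Q \cap L_{1,n}$ is literally the Borromean rings $L_{1,3}$, check via the normal-plane computation that slice meridians are ambient meridians, and transport the classical three-dimensional commutator identity forward along the inclusion. Your route is structurally attractive --- it confines the homotopy-theoretic work to dimension three and explains exactly in what sense $L_{1,n}$ generalizes the Borromean rings --- but it is less self-contained: the commutator identity for the Borromean rings, which is precisely what the paper proves by hand, is quoted as classical rather than proved. One step also needs tightening: a meridian is only well defined up to conjugation and inversion, so a priori your slice basis maps to a conjugate of $a^{\pm 1}$ and a conjugate of $b^{\pm 1}$, and conjugates of basis elements need not form a basis of a free group (e.g.\ $\langle bab^{-1}, aba^{-1} \rangle$ is a proper subgroup of $\langle a, b\rangle$), so ``basis to basis, hence isomorphism'' is not yet justified as stated. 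Your conclusion survives either way: a conjugate of $a$ together with a conjugate of $b$ always generate a rank-two free subgroup (trivial edge stabilizers in the Bass--Serre tree of $\Z * \Z$), so the induced map is injective, and injectivity already forces the image of the commutator to be nontrivial; alternatively, base all meridians at the origin and take the splitting sphere $\{|x| = 5/2\}$, which meets $Q$ in a splitting $2$-sphere, so that the two van Kampen decompositions are compatible and the isomorphism claim becomes honest. A final small correction: your closing remark that the $\pi_1$ method is special to $i = 1$ overlooks the case $i = n-2$, where $K_2$ is a circle, so the method is not ruled out there; the accurate statement is only that it fails to cover all $1 \leq i \leq n-2$.
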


\begin{proof}
We want to show that the loop $K_3$ does not bound a disk disjoint from $K_1 \cup K_2$.  We do so by showing it is nontrivial in the fundamental group of  $\R^n - (K_1 \cup K_2)$.  We first prove the following lemma.

\begin{lemma}  
$\pi_1( \R^n - (K_1 \cup K_2))$ is the free group on two generators.
\label{lem:f2}
\end{lemma}

This follows from the following well known lemma.

\begin{lemma}
\label{lemma-svk}
The groups $\pi_1( S^n - J_i)$, $\pi_1( \R^n - K_i)$,  and $\pi_1( B^n - K_i)$ are all isomorphic to $\Z$ if $K_i$ is a round $(n-2)$-sphere in $B_n \subset \R^n$ and $J_i$ is a round $(n-2)$-sphere in $S^n$. 
\end{lemma}

\begin{proof}[Proof of Lemma~\ref{lemma-svk}]
Both the complement of $B^n$ and the boundary of $B^n$ are simply connected in $\R^n$ (and the same is true of a round ball in 
$S^n$) and thus the Seifert-Van Kampen Theorem shows that the fundamental group of the larger space is only dependent on the fundamental group of the ball.
Now we know that $\pi_1( S^n - J_i)$, $\pi_1( \R^n - K_i)$,  and $\pi_1( B^n - K_i)$ are all isomorphic to each other and we
need only show that $\pi_1( S^n - J_i)$ is isomorphic to $\Z$.  This follows from Lemma~\ref{lemma:dr}, which states that 
if we take a round sphere $S^{n-2} \subset S^n$, then $S^n - S^{n-2}$ deformation retracts onto $S^1$ and therefore has the fundamental group of a circle.
\end{proof}

\begin{proof}[Proof of Lemma~\ref{lem:f2}]
Since $K_1$ and $K_2$ are unlinked, there exists an embedded $S^{n-1}$ that separates them.  
Therefore the Seifert-Van Kampen Theorem implies that the fundamental group of $\R^n - (K_1 \cup K_2)$ is the free product of the fundamental groups of $B^n - K_1$ and $B^n - K_2$, i.e., the free group on two generators.
\end{proof}

Now to conclude the proof of Theorem~\ref{thm:davis}, we observe that in the case of $L_{1,n}$, the fundamental group of $\R^n - (K_1 \cup K_2)$ has generators
$$\begin{array}{ccl}
\alpha (t) &=& (\quad 3 \sin {2 \pi t}, \quad 0, \, 0 , \dots 0, \, 3 - 3 \cos{ 2 \pi t} ) \\
\beta (t)  & =& (2-2 \cos{2 \pi t}, \,  0, \, 0 , \dots 0,  \quad 2 \sin{ 2 \pi  t \quad)} 
\end{array} \quad \quad (0 \leq t \leq 1),$$
with the origin as the base point.  Let $\gamma (t)= (t, 0,0, \dots, 0)$.  We orient $K_3$ starting from the point $(1,0, \dots, 0)$ and move initially in the positive $x_n$ direction.

We observe that $\gamma K_3 \gamma^{-1}$ is homotopic to $\alpha \beta^{-1} \alpha ^{-1} \beta$,
which is a commutator of the two generators.  
(The circle $\alpha$ is clearly homotopic to the loop made by following the top half of $K_3$ then closing it by following the $x_1$-axis.  
The hard part of this observation is noticing that $\beta^{-1} \alpha ^{-1} \beta$ is homotopic to a curve that follows the bottom half of ellipse $K_3$ and then returns along the $x_1$-axis.)
Since the commutator is nontrivial, $K_3$ must represent a nontrivial element in $\pi_1 \left(\R^n - (K_1 \cup K_2) \right)$.  Thus, $L_{1,n}$ is not an unlink but its proper sublinks are unlinks; hence is $L_{1,n}$ is Brunnian.
\end{proof}

\section{Open Questions} \label{sec:open}

Here we list a few conjectures regarding the following infinite family of convex links.  
\begin{equation} 
\label{thirdlink}
L_{i,j,n}= K_1 \cup K_2 \cup K_3 
\end{equation}
\begin{align*}
K_1 & =  \{(x_1, x_2, \dots x_n): x_1^2 + x_2^2 + \dots + x_{n-j-1}^2 = 4, \; \, x_{n-j}  = \cdots = x_n=0 \} \\
K_2 &= \{(x_1, x_2, \dots x_n): x_{i+1}^2 + x_{i+2}^2 + \dots + x_{n}^2 = 9, \;\; x_1=x_2 = \cdots = x_{i}=0 \} \\
K_3 &= \{(x_1, x_2, \dots x_n): x_1^2 + x_2^2 +\cdots + x_{i}^2 + \frac{x_{n-j}^2 + \cdots + x_{n-1}^2  + x_n^2}{16} = 1, \\
 & \qquad \quad x_{i+1} =x_{i+2}= \cdots =x_{n-j-1}=0 \}
\end{align*}

These links generalize the family $L_{i,n}$ considered in section~\ref{sec:main}, in that $L_{i,0,n}=L_{i,n}$.

\begin{conjecture} 
All the links in the family $L_{i,j,n}$ are Brunnian.  
\end{conjecture}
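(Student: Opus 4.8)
The plan is to follow the architecture of the proof of Theorem~\ref{thm:cbl}: first check that every proper sublink of $L_{i,j,n}$ is an unlink, and then show that the whole link is not an unlink. It is convenient to group the coordinates into three blocks $A=(x_1,\dots,x_i)$, $B=(x_{i+1},\dots,x_{n-j-1})$, and $C=(x_{n-j},\dots,x_n)$, so that $K_1$ lives in $A\cup B$, $K_2$ in $B\cup C$, and $K_3$ in $A\cup C$, each component vanishing on the remaining block. With this bookkeeping the sublink step is immediate and identical in spirit to the first lemma: the radius-$2$ ball $B_1$ bounded by $K_1$ is disjoint from $K_2$ (radius $3$) and from $K_3$, and cyclically $B_2$ misses $K_3$ while $B_3$ misses $K_1$, so each pair is split and hence an unlink. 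The real content is therefore the claim that $L_{i,j,n}$ is not an unlink.

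Here the method of section~\ref{sec:main} does not port verbatim, and identifying why points to the right tool. The dimensions are $\dim K_1 = n\minus j\minus 2$, $\dim K_2 = n\minus i\minus 1$, and $\dim K_3 = i+j$, so that $\dim K_1 + \dim K_2 + \dim K_3 = 2n\minus 3$. When $j=0$ the pair $K_2,K_3$ has complementary dimension and the capped sphere $K_3'$ forms a generalized Hopf link with $K_2$ (Lemma~\ref{lemma:hopf}); for $j>0$ no such pairwise Hopf linking is available, and every pairwise linking in fact vanishes, so the nontriviality is a genuinely triple, Borromean-type phenomenon. Correspondingly, the reflection argument of Lemma~\ref{lemma:disjt} breaks down: $B_1$ now has codimension $j+1\ge 2$ in $\R^n$, so there is no hyperplane to fold across and no innermost-sphere reduction that removes $D_2\cap B_1$.

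My preferred route is an induction on $j$ that reduces every $L_{i,j,n}$ to the already-settled base case $L_{i,0,n}=L_{i,n}$ of Theorem~\ref{thm:cbl}. The key observation is that $L_{i,j,n}$ is invariant under the group $O(j+1)$ acting on the rotational block $C$, and that $L_{i,j+1,n+1}$ is obtained from $L_{i,j,n}$ by revolving this block through one further dimension---an Artin-type spin about the axis $\{C=0\}$. Under this spin $K_1$ lies on the axis and is fixed, while $K_2$ and $K_3$, which meet the axis along their $B$- and $A$-equators, are each spun up one dimension, exactly matching $\dim K_2$ and $\dim K_3$ in $L_{i,j+1,n+1}$. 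The inductive step is then a lemma asserting that spinning preserves the property of not being an unlink; I would prove it by tracking the relevant obstruction through the complement, using that the complement of the spun link is built from that of the original by a suspension-type construction which carries the nontrivial class witnessing non-triviality to a nontrivial class one dimension up.

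The main obstacle is exactly this inductive step---equivalently, certifying that the higher-order linking survives the passage from $j$ to $j+1$. A conceptually clean way to package the obstruction, and a second route to the theorem, is cohomological: by Alexander duality the meridian classes $u_1,u_2,u_3$ sit in degrees $j+1$, $i$, and $n\minus i\minus j\minus 1$ of $H^*(S^n-L)$, all pairwise cup products vanish because the sublinks are split, and so the triple Massey product $\langle u_1,u_2,u_3\rangle$ is defined in $H^{n-1}(S^n-L)$, the degree dictated by the relation $2n\minus 3$. A nonzero Massey product obstructs $L$ from being an unlink, since the complement of an unlink retracts onto a wedge of sphere-complements whose triple Massey products vanish. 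Whether one argues by spinning or by Massey products, the crux is the same: showing this triple invariant is nonvanishing uniformly in $(i,j,n)$, either by direct computation on the explicit ellipsoid model or by transporting the known nonzero value from $L_{i,n}$ along the spin.
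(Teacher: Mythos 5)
First, a point of context: the statement you are proving is not a theorem of the paper but one of its open conjectures (Section~\ref{sec:open}); the paper offers no proof to compare against, so your proposal must stand on its own. It does not, and you essentially say so yourself. Your setup is sound: the block decomposition $A$, $B$, $C$ is the right bookkeeping, the dimension count $\dim K_1+\dim K_2+\dim K_3 = (n\minus j\minus 2)+(n\minus i\minus 1)+(i+j) = 2n\minus 3$ is correct, the cyclic splitness argument does dispose of the proper sublinks, and your diagnosis of why the paper's machinery fails for $j>0$ is accurate --- $K_3\cap B_1$ is an $(i\minus 1)$-sphere of codimension $j+1$ in $K_3$, so it no longer separates $K_3$ and no analogue of $K_3'$ (Lemma~\ref{lemma:hopf}) exists, and $B_1$ has codimension $j+1\geq 2$ in $\R^n$, so the reflection trick of Lemma~\ref{lemma:disjt} has no hyperplane to fold across. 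The degree bookkeeping for the Massey product ($u_1,u_2,u_3$ in degrees $j+1$, $i$, $n\minus i\minus j\minus 1$, product in $H^{n-1}$) is also correct. But the entire mathematical content of the conjecture --- the step that would distinguish $L_{i,j,n}$ from an unlink --- is exactly the step you defer: you never prove that spinning preserves the property of not being an unlink (this is genuinely nontrivial, since the relevant obstruction is not $\pi_1$ and one must show the triple invariant, not merely some homotopy-theoretic data, survives the suspension-type description of the spun complement), and you never compute that $\langle u_1,u_2,u_3\rangle \neq 0$ for the explicit ellipsoidal model, nor handle its indeterminacy. Identifying two plausible strategies and observing that each reduces to an unverified crux is a research plan, not a proof.

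One concrete error as well: you assert that the ball $B_1$ bounded by $K_1$ is disjoint from $K_3$. It is not; $K_3\cap B_1 = \{|A|=1,\,B=0,\,C=0\}$ is nonempty (this mirrors the $j=0$ case, where the paper notes $B_1\cap K_3$ is the equator of $K_3$). This slip is harmless for the sublink lemma, since the cyclic statements ($B_1\cap K_2=\emptyset$, $B_2\cap K_3=\emptyset$, $B_3\cap K_1=\emptyset$) suffice to split each pair, but it should be corrected. In summary: the reduction of the conjecture to either the spinning lemma or the Massey-product nonvanishing is a reasonable and possibly fruitful framing, but as written the proposal leaves the conjecture exactly as open as the paper does.
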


\begin{conjecture} 
There exist convex Brunnian links that are not isotopic to a link of the form $L_{i,j,n}$.  
\end{conjecture}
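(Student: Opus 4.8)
The plan is to settle the conjecture by producing a convex Brunnian link that carries a link invariant not shared by any member of the family $L_{i,j,n}$. The cheapest such invariant is the number of components: every $L_{i,j,n}$ has exactly three components, and the number of components is preserved by isotopy, so it suffices to construct a convex Brunnian link with four or more components. I would therefore drop the three-component restriction and search for a $k$-component convex Brunnian link in $\R^n$ with $k \geq 4$ and $n$ large.

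First I would build the components as round spheres together with ellipsoids that are arbitrarily close to round, adapting the two mechanisms behind Theorem~\ref{thm:cbl}. To force every proper sublink to be an unlink, I would arrange --- exactly as in the first lemma of Section~\ref{sec:main} --- that inside any sublink some component bounds a ball of the appropriate dimension disjoint from the rest, so that the sublink splits. To force the whole link not to be an unlink, I would generalize Lemma~\ref{lemma:hopf}: cap off all but one component using the balls they bound (the analog of passing from $K_3$ to $K_3'$), and then show the remaining component is homotopically nontrivial in the complement of the capped configuration. Where Lemma~\ref{lemma:hopf} detected nontriviality through a single class in $\pi_i(S^n - G_2)$, together with the deformation retraction supplied by Lemma~\ref{lemma:dr}, the $k$-component version must be detected by a higher-order, Massey-type linking obstruction, since now no pair --- indeed no proper subcollection --- links.

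I expect the main obstacle to be realizing convexity and the Brunnian property at the same time once $k \geq 4$. In $\R^3$ this is impossible for up to five components, where the Borromean rings are the only convex Brunnian link, so any successful construction must genuinely exploit the extra transversal room of high dimensions; moreover the component dimensions cannot be chosen freely, since a necessary condition for the higher linking obstruction to be defined and nonzero constrains them (for three components this is the Borromean relation $\dim K_1 + \dim K_2 + \dim K_3 = 2n-3$, which each $L_{i,j,n}$ satisfies). The delicate step is the non-splitting argument: replacing the single-$\pi_i$ computation of Lemma~\ref{lemma:hopf} by a higher Massey product and then verifying that nearly round, convex position does not force that product to vanish.

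As a complementary route that stays within three components, I would instead seek a link with the same dimensional signature $\dim K_1 + \dim K_2 + \dim K_3 = 2n-3$ as some $L_{i,j,n}$ but a distinct isotopy type, to be separated by a Milnor-style $\bar\mu$-invariant rather than by component count. This would prove the conjecture even if every high-dimensional convex Brunnian link had exactly three components, but it is harder, because certifying non-isotopy requires an invariant fine enough to see past the homotopy-theoretic data already used above.
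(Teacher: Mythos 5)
The statement you are addressing is a conjecture, and the paper offers no proof of it; so the only question is whether your proposal itself constitutes a proof. It does not: it is a research program, and the gap is that its central object is never constructed. Your opening reduction is sound as far as it goes --- component count is an isotopy invariant, every $L_{i,j,n}$ has exactly three components, so any convex Brunnian link with $k \geq 4$ components would settle the conjecture. But the existence of such a link is precisely as open as the conjecture itself. In $\R^3$ it is known to fail for $4$ and $5$ components by \cite{MR2287436, davis}, and in higher dimensions nothing in the paper's machinery produces one: Lemma~\ref{lemma:hopf} detects the failure of splitting by a single homotopy class in $\pi_i(S^n - G_2)$ after capping one component, and this works exactly because, once $K_3$ is capped to $K_3'$, the remaining configuration is a generalized Hopf link --- a two-component phenomenon. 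For $k \geq 4$ Brunnian components no pair (indeed no proper subcollection) links, so, as you correctly note, the obstruction must be a higher-order one; but you neither define the relevant Massey-type invariant in the smooth high-dimensional setting, nor exhibit components realizing it, nor verify it is nonvanishing, nor check compatibility with convexity. Each of these is a substantive step, and the last is the crux: the whole content of the convex-Brunnian literature cited here is that convexity obstructs higher-order linking in low dimensions, so "convexity does not force the product to vanish" is exactly what must be proved, not assumed. Note also that the paper's own non-splitting argument leans on a reflection trick (Lemma~\ref{lemma:disjt}) exploiting the mirror symmetry of all components across $x_n = 0$; a four-component construction would need either an analogous symmetry or a replacement argument, and you supply neither.

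Your complementary three-component route has the same character: you correctly observe that every $L_{i,j,n}$ satisfies $\dim K_1 + \dim K_2 + \dim K_3 = 2n-3$ (indeed $(n-j-2)+(n-i-1)+(i+j) = 2n-3$), but distinguishing a hypothetical new link from the family by a $\bar\mu$-type invariant again presupposes a construction you do not give, plus an invariant fine enough to separate isotopy classes with identical dimensional data --- which you acknowledge is beyond the homotopy-theoretic tools used in the paper. In short: the proposal identifies reasonable invariants for \emph{distinguishing} a new link from the family $L_{i,j,n}$, which is the easy half, but the existence half --- the actual content of the conjecture --- is left entirely unresolved.
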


No such links exist in $\R^3$ for 3, 4, and 5 component links by \cite{MR2287436, davis}.  We speculate that adding more components in $\R^3$ will not produce examples.  However, in higher dimensions it seems likely that other convex Brunnian links exist.  In particular,  is there such a link comprised of three $(n-2)$-dimensional knots for $n>3$?  Aside from the Borromean rings in $\R^3$, this case is impossible within our families \eqref{firstlink} and \eqref{thirdlink}.

\begin{conjecture} 
Although no Brunnian link can be built out of round spheres (see \cite{MR2384264}), it is true that for any $\epsilon >0 $, our families \eqref{firstlink} and \eqref{thirdlink} of links may be isotoped so that $K_1$ and $K_2$ remain as round spheres and $K_3$ is contained in an $\epsilon$-neighborhood of a round sphere.  We conjecture that all convex Brunnian links can be made arbitrarily close to perfectly round in this manner.
\end{conjecture}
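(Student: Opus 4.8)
The statement has two parts: the assertion for the specific families \eqref{firstlink} and \eqref{thirdlink}, and the broader conjecture about arbitrary convex Brunnian links. The first part is essentially a computation. For $L_{i,n}$, the remark preceding this section already exhibits it: replacing the squared radii $(1,4,9,16)$ by $(1,1+\epsilon,1+2\epsilon,1+3\epsilon)$ keeps $K_1$ and $K_2$ round, leaves the ordering of the radii (hence the proof of Theorem~\ref{thm:cbl}) intact, and forces the ratio of the two distinct semi-axes of $K_3$ to tend to $1$ as $\epsilon\to 0$, so that $K_3$ lies in an arbitrarily thin neighborhood of a round $i$-sphere. The same substitution works verbatim for $L_{i,j,n}$ in \eqref{thirdlink}: the coefficients governing $K_1,K_2$ are untouched, while the single eccentric coefficient $16$ in $K_3$ is pushed toward $1$. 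Thus I would first dispose of the ``it is true that'' clause by this direct scaling argument, checking only that the disjointness and non-splitness used in Theorem~\ref{thm:cbl} are open conditions, preserved for all small $\epsilon$.

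For the conjecture proper I would set up the problem intrinsically. Each component $K_j$ of a convex Brunnian link is the boundary of a convex body $B_j$ lying in an affine subspace $P_j\cong\R^{d_j}$, so $K_j$ is a convex hypersurface inside $P_j$. The goal is a single ambient isotopy of $\R^n$ carrying the link to one whose every component lies within Hausdorff distance $\epsilon$ of a round sphere. The natural engine is to round each $B_j$ in isolation, and I would do this with support functions: the support functions of convex bodies in $\R^{d_j}$ form a convex cone, a constant is the support function of a round ball, and therefore
\[
h_{j,t} \;=\; (1-t)\,h_{B_j} \;+\; t\,r_j
\]
is a valid support function for every $t\in[0,1]$. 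Its convex bodies $B_j(t)$ interpolate from $B_j$ at $t=0$ to a round $d_j$-ball of radius $r_j$ at $t=1$, and for $t$ near $1$ the boundary $\partial B_j(t)$ is within $\epsilon$ of a round $(d_j\minus 1)$-sphere. (Normalized mean curvature flow would serve the same purpose, converging to round spheres by Huisken's theorem, but the support-function homotopy is explicit and elementary.) Hence each component, considered by itself, can be rounded through convex hypersurfaces.

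The main obstacle is \emph{simultaneity}. In a Brunnian link the balls $B_j$ necessarily meet the other components---that is precisely what linking means---so the individual rounding homotopies cannot simply be run at once: the moving hypersurfaces $\partial B_j(t)$ in different planes $P_j$ may collide, destroying embeddedness and the link type. The heart of a proof must therefore show that the per-component homotopies can be assembled into one ambient isotopy with the components kept pairwise disjoint throughout. I would attack this by first spreading the link out (an ambient dilation creating room between the planes $P_j$), then choosing the limiting radii $r_j$ of very different magnitudes---nested in size as in the $(4,9,16)$ scaling of Theorem~\ref{thm:cbl}---so that at each instant a smaller rounding component passes cleanly through a larger one, and finally interleaving or reparametrizing the homotopies $h_{j,t}$ so that the straight-line paths in the cone of support functions, restricted to the linked planes, remain transverse and collision-free. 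Proving that collisions can always be avoided---equivalently, that the space of convex Brunnian links of a fixed type is connected with nearly-round representatives dense in it---is the crux, and the reason the statement is only conjectured. A consistency check guides the analysis: since \cite{MR2384264} forbids all components from being exactly round, any such scheme must break disjointness precisely in the limit $t\to 1$ on all components simultaneously, so the argument should deliver every $\epsilon>0$ while necessarily failing at $\epsilon=0$.
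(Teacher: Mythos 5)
This statement is one of the paper's open conjectures; the paper supplies no proof of it, only the remark following Theorem~\ref{thm:cbl} that justifies the ``it is true that'' clause for family \eqref{firstlink} by replacing the coefficients $(1,4,9,16)$ with $(1,1+\epsilon,1+2\epsilon,1+3\epsilon)$. Your handling of that clause is exactly the paper's argument, correctly extended to \eqref{thirdlink} (where it indeed goes through verbatim, since the isotopy claim does not depend on the as-yet-conjectural Brunnian property of $L_{i,j,n}$). For the general conjecture you rightly stop short of claiming a proof: your support-function interpolation rounds each component individually, but, as you yourself identify, the simultaneity/collision problem --- assembling the per-component homotopies into one ambient isotopy preserving the link type --- is precisely the open crux, so your proposal is a reasonable research program rather than a proof, which is the correct posture toward a statement the paper itself only conjectures.
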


\footnotesize
\bibliography{convexbrunnian}

\begin{thebibliography}{10}

\bibitem{brunn}
H.~Brunn.
\newblock \"{U}ber verkettung.
\newblock In {\em Math. Phys. Klasse}, volume~22 of {\em Sitzungber}, pages
  77--99. Bayerische Akad. Wiss., 1892.

\bibitem{davis}
Robert~M. Davis.
\newblock Brunnian links of five components.
\newblock Master's thesis, Wake Forest University, 2005.

\bibitem{MR0137106}
Hans Debrunner.
\newblock Links of {B}runnian type.
\newblock {\em Duke Math. J.}, 28:17--23, 1961.

\bibitem{MR873456}
Michael~H. Freedman and Richard Skora.
\newblock Strange actions of groups on spheres.
\newblock {\em J. Differential Geom.}, 25(1):75--98, 1987.

\bibitem{MR1867354}
Allen Hatcher.
\newblock {\em Algebraic topology}.
\newblock Cambridge University Press, Cambridge, 2002.

\bibitem{MR2287436}
Hugh~Nelson Howards.
\newblock Convex {B}runnian links.
\newblock {\em J. Knot Theory Ramifications}, 15(9):1131--1140, 2006.

\bibitem{MR2384264}
Hugh~Nelson Howards.
\newblock Brunnian spheres.
\newblock {\em Amer. Math. Monthly}, 115(2):114--124, 2008.

\bibitem{MR1103189}
Bernt Lindstr{\"o}m and Hans-Olov Zetterstr{\"o}m.
\newblock Borromean circles are impossible.
\newblock {\em Amer. Math. Monthly}, 98(4):340--341, 1991.

\bibitem{MR0238302}
David~E. Penney.
\newblock Generalized {B}runnian links.
\newblock {\em Duke Math. J.}, 36:31--32, 1969.

\bibitem{MR0173256}
Takaaki Yanagawa.
\newblock Brunnian systems of {$2$}-spheres in {$4$}-space.
\newblock {\em Osaka J. Math.}, 1:127--132, 1964.

\end{thebibliography}
\bibliographystyle{plain}

\end{document}